 \newtheorem{Theorem}{Theorem}
\newtheorem{Definition}{Definition}
\newtheorem{Proposition}{Proposition}
\newtheorem{Lemma}{Lemma}
\newcommand{\A}{{\cal A}}
\newcommand{\HH}{{\cal H}}
\newcommand{\uu}{{\bf u}}
\newcommand{\vv}{{\bf v}}
\newcommand{\x}{{\bf x}}
\newcommand{\y}{{\bf y}}
\newcommand{\e}{{\bf e}}
\newcommand{\0}{{\bf 0}}
\begin{document}
\title{New Classes of Positive Semi-Definite Hankel Tensors}

\author{Qun Wang \footnote{Department of Applied
 Mathematics, The Hong Kong Polytechnic University, Hung Hom, Kowloon, Hong
 Kong.   Email: wangqun876@gmail.com (Q. Wang).}
\quad Guoyin Li
\footnote{Department of Applied Mathematics, University of New South
Wales, Sydney 2052, Australia. E-mail: g.li@unsw.edu.au (G. Li).
This author's work was partially supported by Australian Research
Council.}
\quad Liqun Qi\footnote{Department of Applied
 Mathematics, The Hong Kong Polytechnic University, Hung Hom, Kowloon, Hong
 Kong.
 E-mail: maqilq@polyu.edu.hk (L. Qi).  This author's work was partially supported by the Hong Kong Research Grant Council (Grant No. PolyU
501212, 501913, 15302114 and 15300715).}
\quad Yi Xu \footnote{Department of Mathematics, Southeast
University, Nanjing, 210096 China.   Email: yi.xu1983@gmail.com (Y. Xu).}}
\date{\today} \maketitle

\begin{abstract}
\noindent  

A Hankel tensor is called a strong Hankel tensor if the Hankel matrix generated by its generating vector is positive semi-definite.  It is known that an even order strong Hankel tensor is a sum-of-squares tensor, and thus a positive semi-definite tensor.   The SOS decomposition of strong Hankel tensors has been well-studied by Ding, Qi and Wei \cite{DQW1}.  On the other hand, very little is known for positive semi-definite Hankel tensors which are not strong Hankel tensors.   In this paper, we study some classes of positive semi-definite Hankel tensors which are not strong Hankel tensors.  These include truncated Hankel tensors and quasi-truncated Hankel tensors.   Then we show that a strong Hankel tensor generated by an absoluate integrable function is always completely decomposable, and give a class of SOS Hankel tensors which are not completely decomposable.

\noindent {\bf Key words:}\hspace{2mm} Hankel tensors, generating vectors, positive semi-definiteness,
strong Hankel tensors.

\noindent {\bf AMS subject classifications (2010):}\hspace{2mm}
15A18; 15A69
  \vspace{3mm}

\end{abstract}

\section{Introduction}
\hspace{4mm}

Hankel tensors have important applications in signal processing \cite{BB,BDA,CQW,DQW}, automatic control \cite{Sm}, and geophysics \cite{OS,TBM}.
For example, the positive semi-definiteness of Hankel tensor can be a condition for the multidimensional moment problem is solvable or not \cite{Be,Las,Qi16}.


%
In \cite{Qi15}, two classes of positive semi-definite (PSD) Hankel tensors were identified: even order strong Hankel tensors and even order complete Hankel tensors.
It was proved in \cite{LQX} that complete Hankel tensors are strong Hankel tensors, and even order strong Hankel tensors are SOS (sum-of-squares) tensors.
In \cite{LQW16}, generalized anti-circulant tensors were studied, which are one special class of Hankel tensors.
The necessary and sufficient conditions for positive semi-definiteness of even order generalized anti-circulant tensors in some cases were given, and the tensors are strong Hankel tensors and SOS tensors in these cases.
Inheritance property was given in \cite{DQW1}, which means that if a lower-order Hankel tensor is positive semi-definite (or positive definite, or negative semi-definite, or negative definite, or SOS), then its associated higher-order Hankel tensor with the same generating vector, where the higher order is a multiple of the lower order, is also positive semi-definite (or positive definite, or negative semi-definite, or negative definite, or SOS, respectively). The SOS decomposition of strong Hankel tensors have also been given in \cite{DQW1}.
The inheritance property established in \cite{Qi15} about strong Hankel tensor can be regarded as a special case of this inheritance property.
There are other results about PSD Hankel tensors, SOS Hankel tensors and PNS non-SOS (short for PNS as in \cite{Ch}) Hankel tensors and some regions which do not exist PNS Hankel tensors were given \cite{CQW16}. 
A recent detailed study on general SOS tensors can be found in \cite{CQL}. 

Denote that $[n] := \{1, \cdots, n\}$. Tensor $\A$ is said to be a symmetric tensor if its entries $a_{i_1\cdots i_m}$ is invariant under any index permutation. Denote the set of all the real symmetric tensors of order $m$ and dimension $n$ by $S_{m,n}$.

Let $\vv = (v_0, \cdots, v_{(n-1)m})^\top$. Define $\A = (a_{i_1\cdots i_m})\in S_{m,n}$ by
\begin{equation}\label{e1-1}
  a_{i_1\cdots i_m} = v_{i_1+\cdots+i_m-m},
\end{equation}
for $i_1, \cdots, i_m \in [n]$. Then $\A$ is a {\bf Hankel tensor} \cite{LQX,PDV,Qi15} and $\vv$ is called the {\bf generating vector} of $\A$.
An order $m$ dimensional $n$ Hilbert tensor $\HH$ is a Hankel tensor with $\vv = (1, \frac{1}{2}, \frac{1}{3},\cdots, \frac{1}{nm})^\top$, and even order Hilbert tensors are
positive definite \cite{SQ}.

Let $\x \in \Re^n$. Then $\x^m$ is a rank-one symmetric tensor with entries $x_{i_1}\cdots x
_{i_m}$.  For $\A \in S_{m,n}$ and $\x \in \Re^n$, we have a homogeneous polynomial $f(\x)$ of $n$ variables and degree $m$,
\begin{equation}\label{e1-2}
  f(\x) = \A\x^{\otimes m} \equiv \sum_{i_1,\cdots,i_m \in [n]} a_{i_1\cdots i_m} x_{i_1}\cdots x_{i_m}.
\end{equation}
Note that there is a one to one relation between homogeneous polynomials and symmetric
tensors.
If $f(\x)\geq 0$ for all $\x \in \Re^n$, then homogeneous polynomial $f(\x)$ and symmetric tensor $\A$ are called {\bf positive semi-definite}(PSD). If $f(\x) > 0$ for all $\x \in \Re^n$, $\x \neq 0$, then $f(x)$ and $\A$ are called {\bf positive definite} (PD). The concepts of positive semi-definite and positive definite symmetric tensors were introduced in \cite{Qi}. The problem for determining a given even order symmetric tensor is positive semi-definite or not has important applications in engineering and science \cite{DQW,HLQS,Re07}. If $\A$ is a Hankel tensor, then $f(\x)$ is called a Hankel polynomial.

Let $A = (a_{ij})$ be an $\lceil\frac{(n-1)m+2}{2}\rceil \times \lceil\frac{(n-1)m+2}{2}\rceil$ matrix with $a_{ij} \equiv v_{i+j-2}$, where $v_{2\lceil\frac{(n-1)m}{2}\rceil}$ is an additional number when $(n-1)m$ is odd. Then $A$ is a Hankel matrix, associated with the Hankel tensor $\A$. When $(n-1)m$ is even, such an associated Hankel matrix is unique. Recall from \cite{Qi15} that $\A$ is called a {\bf strong Hankel tensor} if there exists an associated Hankel matrix $A$ which is positive semi-definite.

Let $g(\y) = \y^\top A\y$, where $\y = (y_1, \cdots, y_{\frac{(n-1)m+2}{2}})^\top$ and
$A$ is an associated Hankel matrix of $\A$. Then, $\A$ is a strong Hankel tensor if and only if $g$ is PSD for at least one associated Hankel matrix $A$ of $\A$.

Another class of Hankel tensors given by Qi in \cite{Qi15} are complete Hankel tensors.
A vector $\uu = (1, \gamma, \gamma^2, \cdots , \gamma^{n-1})^\top$ for some $\gamma \in \Re$ is called a Vandermonde vector \cite{Qi15}.
If tensor $\A$ has the form
\begin{eqnarray*}
  \A = \sum_{i \in [r]}\alpha_i(\uu_i)^{\otimes m},
\end{eqnarray*}
where $\uu_i$ for $i = 1, \cdots, r$, are all Vandermonde vectors, then we say that $\A$ has a Vandermonde decomposition. It was shown in \cite{Qi15} that a symmetric tensor is a Hankel tensor if and only if it has a Vandermonde decomposition. If the coefficients $\alpha_i$ for $i = 1, \cdots, r$, are
all nonnegative, then $\A$ is called a complete Hankel tensor \cite{Qi15}.
It was proved in \cite{Qi15} that even order strong or complete Hankel tensors are positive semi-definite.

Let $m = 2k$. If $f(\x)$ can be decomposed to the sum of squares of polynomials of degree $k$, then $f(\x)$ is called a \textbf{sum-of-squares (SOS) polynomial}, which means there exist forms $g_1(\x),\cdots, g_k(\x)$ of degree $k$ such that
\begin{equation}\label{e-sos}
  f(\x)=\sum\limits_{i=1}^kg_i(\x)^2,
\end{equation}
and the corresponding symmetric tensor $\A$ is called an \textbf{SOS tensor} \cite{HLQ} (for a recent study see also \cite{CQL}).

Clearly, an SOS polynomial (tensor) is a PSD polynomial (tensor), but not vice versa. In 1888, young Hilbert \cite{Hi} proved that for homogeneous polynomials, only in the following three cases, a positive semi-definite form definitely is a sum-of-squares polynomial: 1) $m = 2$; 2) $n = 2$; 3) $m=4$ and $n=3$, where $m$ is the degree of the polynomial and $n$ is the number of variables.
Hilbert proved that in all the other possible combinations of $n$ and even $m$, there are PNS homogeneous polynomials.
The first PNS homogeneous polynomial is the Motzkin function \cite{Mo} with $m=6$ and $n=3$.
Other examples of PNS homogeneous polynomials was found in \cite{AP, Ch, CL, Re}.

 Let $\A \in  S_{m, n}$.  If there are positive integer $r \in \mathbb{N}$ and vectors $\x_j \in \Re^n$ for $j \in [r]$ such that
\begin{equation}\label{eq:lala}
\A = \sum_{j \in [r]} \x_j^{\otimes m},\end{equation}
then we say that $\A$ is a {\bf completely decomposable tensor}.   If $\A$ admits a decomposition (\ref{eq:lala}) with $\x_j \in \Re^n_+$ for all $j \in [r]$, then $\A$ is called a {\bf completely positive tensor} \cite{QXX}.
Clearly, a complete Hankel tensor is a completely decomposable tensor.

By \cite{Qi15}, a necessary condition for $\A$ to be PSD is that
\begin{equation} \label{e1-3}
  v_{(i-1)m} \geq 0 \quad \mbox{for} \quad i \in [n].
\end{equation}

We know many properties of strong Hankel tensors. However, we know little about PSD Hankel tensors but not strong Hankel tensors.
In this paper, we present some classes of Hankel tensors which are PSD but not strong Hankel tensors, including truncated Hankel tensors and  quasi-truncated Hankel tensors.
Then we show that strong Hankel tensors are always completely decomposable, and give a class of SOS Hankel tensors which are not completely decomposable.

The remainder of this paper is organized as follows.

In the next section, we introduce truncated Hankel tensors which are of odd dimension, i.e., $n$ is odd, and whose generating vector $v$ has only three nonzero entries: $v_0$, $v_{(n-1)m \over 2}$ and $v_{(n-1)m}$.    Since we are only concerned about PSD Hankel tensors, by (\ref{e1-3}), we assume that these three entries are all nonnegative.     Under this assumption, we show that a truncated Hankel tensor $\A$ is not a strong Hankel tensor as long as $v_{(n-1)m \over 2}$ is positive.   Then we show that when $m$ is even, $n=3$ and $v_0 = v_{2m}$, 
there are two numbers $d_1=d_1(m)$ and $d_2=d_2(m)$ with $0<d_2\leq d_1$, such that if $v_0\geq d_1v_m$, $\A$ is an SOS tensor; and if $v_0\geq d_2$, $\A$ is an PSD tensor.
Then, for $m=6$ and $n=3$, we show that for a truncated Hankel tensor $\A$, the following three statements are equivalent: 1) $\A$ is PSD; 2) $\A$ is SOS; 3) $v_0v_{12} \ge v_6^2d$; and give an explicit value of $d$.

In Section 3, we introduce quasi-truncated Hankel tensors which are of odd dimension, i.e., $n$ is odd, and whose generating vector $v$ has only five nonzero entries: $v_0$, $v_1$, $v_{(n-1)m \over 2}$, $v_{(n-1)m-1}$ and $v_{(n-1)m}$.    Again, since we are only concerned about PSD Hankel tensors, by (\ref{e1-3}), we assume that $v_0$, $v_{(n-1)m \over 2}$ and $v_{(n-1)m}$ are nonnegative.   Under this assumption, we show that a quasi-truncated Hankel tensor is not a strong Hankel tensor as long as $v_{(n-1)m \over 2}$ is positive.   Then we give a necessary condition for a sixth order quasi-truncated Hankel tensor $\A$ to be PSD, a sufficient condition for $\A$ to be SOS, respectively.

In Section 4, we define completely decomposable tensors, show that strong Hankel tensors  generated by absoluate integrable functions are always completely decomposable, and give a class of SOS Hankel tensors which are not completely decomposable.

\section{Truncated Hankel Tensors}
\hspace{4mm}

In this section, we consider the cases that the Hankel tensors $\A$ are generated by $\vv = (v_0, 0, \cdots, 0, v_{\frac{(n-1)m}{2}}, 0, \cdots,  0, v_{(n-1)m})^\top$ where $n$ is odd. 
We call such Hankel tensors {\bf truncated Hankel tensors}.

If $\vv = (v_0, 0, \cdots, 0, v_{\frac{(n-1)m}{2}}, 0, \cdots,  0, v_{(n-1)m})^\top$ where $n$ is odd, (\ref{e1-2}) and $g(\y)$ have the simple form
\begin{eqnarray} \label{e2-1}
f(\x)&=& v_0x_1^m + v_{(n-1)m}x_n^m\nonumber\\
&&+ v_{\frac{(n-1)m}{2}}\sum \left\{\textstyle {m\choose t_1} {m-t_1\choose t_2}\cdots {m-t_1-t_2-\cdots-t_{n-2}\choose t_{n-1}} x_1^{t_1}x_2^{t_2}\cdots x_n^{m-t_1-t_2-\cdots-t_{n-1}}\right. \nonumber\\
&& \quad \quad\quad \quad \quad \quad \left. : (n-1)t_1+(n-2)t_2+\cdots +t_{n-1}=\textstyle\frac{(n-1)m}{2}\right\}.\nonumber\\
\end{eqnarray}
and
\begin{equation} \label{e2-2}
g(\y) = v_0y_1^2+ v_{(n-1)m}y_{\frac{(n-1)m+2}{2}}^2 + v_{\frac{(n-1)m}{2}}\left(y^2_{\frac{(n-1)m}{4}+1}+\sum_{i \neq j}\left\{y_i y_j:i+j=\textstyle\frac{(n-1)m}{2}+2\right\}\right).
\end{equation}

Since we are only concerned about PSD Hankel tensors, we may assume that (\ref{e1-3}) holds.  From (\ref{e2-1}) and (\ref{e2-2}), we have the following proposition.

\begin{Proposition} \label{p2-1}
Suppose that (\ref{e1-3}) holds. If $v_{\frac{(n-1)m}{2}} = 0$, then the truncated Hankel tensor $\A$ is a strong Hankel tensor, and furthermore an SOS Hankel tensor if $m$ is even.   If $v_{\frac{(n-1)m}{2}}  > 0$, then $\A$ is not a strong Hankel tensor.
\end{Proposition}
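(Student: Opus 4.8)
The plan is to handle the two implications separately, in both cases working directly with the associated Hankel matrix $A$; note that since $n$ is odd, $(n-1)m$ is even, so $A$ is unique and $\A$ is a strong Hankel tensor if and only if this single $A$ is PSD.

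For the first implication, suppose $v_{(n-1)m/2}=0$. First I would note that the entry $a_{ij}=v_{i+j-2}$ can be nonzero only when $i+j-2\in\{0,(n-1)m\}$, which forces $(i,j)=(1,1)$ or $(i,j)=(N,N)$ with $N=\frac{(n-1)m+2}{2}$. Thus $A=\mathrm{diag}(v_0,0,\dots,0,v_{(n-1)m})$, and by (\ref{e1-3}) its diagonal is nonnegative, so $A$ is PSD and $\A$ is a strong Hankel tensor. When $m$ is even I would then either quote the cited fact that an even order strong Hankel tensor is SOS, or simply read off from (\ref{e2-1}) that $f(\x)=v_0x_1^m+v_{(n-1)m}x_n^m=v_0(x_1^{m/2})^2+v_{(n-1)m}(x_n^{m/2})^2$ is a sum of squares.

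For the second implication, suppose $v_{(n-1)m/2}>0$. The crux is to read off the shape of $A$: the value $v_{(n-1)m/2}$ sits exactly on the full main anti-diagonal, since $a_{ij}=v_{(n-1)m/2}$ precisely when $i+j=\frac{(n-1)m}{2}+2=N+1$, while the only other nonzero entries are the corners $a_{11}=v_0$ and $a_{NN}=v_{(n-1)m}$. Because $A$ is unique, it suffices to produce one $\y$ with $g(\y)=\y^\top A\y<0$. I would pick an index $a$ with $2\le a\le N-1$ and $a\ne N+1-a$, and set $y_a=1$, $y_{N+1-a}=-1$, and all other coordinates zero. Reading (\ref{e2-2}) for this $\y$, the corner terms $v_0y_1^2$ and $v_{(n-1)m}y_N^2$ vanish, the central square $y_{(n-1)m/4+1}^2$ vanishes (as $a$ and $N+1-a$ avoid the midpoint), and the only surviving contribution is the anti-diagonal pairing, giving $g(\y)=-2v_{(n-1)m/2}<0$. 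Hence $A$ is not PSD and $\A$ is not a strong Hankel tensor.

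The main point requiring care is the existence of such an index $a$, i.e.\ that $\{2,\dots,N-1\}$ contains a point different from the reversal-fixed index $\frac{N+1}{2}$; this holds exactly when $N\ge 4$, equivalently $(n-1)m\ge 6$. The only excluded configurations are $(n-1)m\le 4$, which (with $n$ odd) force $m\le 2$ and are degenerate for higher-order tensors; for genuine order $m\ge 3$ tensors one always has $N\ge 4$, so no separate treatment is needed. Verifying the anti-diagonal structure of $A$ together with this non-degeneracy bookkeeping is really the whole content: once the shape of $A$ is correctly identified, the sign-alternating test vector is essentially forced.
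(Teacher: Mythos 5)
Your proof is correct and takes essentially the same approach as the paper's: for $v_{\frac{(n-1)m}{2}}=0$ the paper likewise reads off from (\ref{e2-2}) that the associated Hankel matrix (which is diagonal with nonnegative entries) is PSD, and for $v_{\frac{(n-1)m}{2}}>0$ it uses exactly your test vector, $\bar \y = \e_i - \e_j$ with $i+j=\frac{(n-1)m}{2}+2$, $i\neq j$ and $i,j\notin\{1,\frac{(n-1)m+2}{2}\}$, to obtain $g(\bar \y)=-2v_{\frac{(n-1)m}{2}}<0$. The two points you make explicit---that the associated Hankel matrix is unique because $(n-1)m$ is even, and that a valid index pair exists only when $(n-1)m\ge 6$---are left implicit in the paper, and the second is a genuine caveat: for $m=2$, $n=3$ with $v_0=v_2=v_4=1$ the associated matrix is in fact PSD, so the proposition's final claim tacitly assumes $m\ge 3$, which your bookkeeping correctly isolates.
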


\begin{proof}  When $v_{\frac{(n-1)m}{2}} =0$, from (\ref{e2-1}) and (\ref{e2-2}), we see that the truncated Hankel tensor $\A$ is a strong Hankel tensor,  and furthermore an SOS Hankel tensor if $m$ is even.   If $v_{\frac{(n-1)m}{2}} > 0$, consider $\bar \y = \e_i-\e_j$ where $i+j=\textstyle\frac{(n-1)m}{2}+2$ ,$i \neq j$ and $i\neq 1$ or $\frac{(n-1)m+2}{2}$.  We see that $g(\bar \y) = -2 v_{\frac{(n-1)m}{2}} < 0$.   Hence $\A$ is not a strong Hankel tensor in this case.
\end{proof}

We consider the case that $m \ge 6$ with $m$ is even and $n=3$. Assume that $v_0 = v_{2m}$.   We have the following theorem.

\begin{Theorem} \label{t2-1}
  Suppose that $\A=(a_{i_1\cdots i_m})$ where $m \ge 6$, $m$ is even and $n=3$ is a truncated Hankel tensor and (\ref{e1-3}) holds. Assume that $v_0 = v_{2m}$.   Let $\vv = (v_0, 0, \cdots, 0, v_m, 0, \cdots,  0, v_0)^\top$£¬ and
    \begin{eqnarray*}
d_1(m) & =& \inf\{d>0: \A \mbox{ is an SOS tensor} \mbox{ for all } \vv \mbox{ such that } v_0\geq d \, v_m\},   \\
d_2(m) & =& \inf\{d>0: \A \mbox{ is a PSD tensor} \mbox{ for all } \vv \mbox{ such that } v_0\geq d \, v_m\}.
  \end{eqnarray*}
  Then
  (1) $d_2(m)=d_1(m)=0$ if $v_m=0$;  (2) $0 \le d_2(m) \le d_1(m)<+\infty$ if $v_m >0$.
\end{Theorem}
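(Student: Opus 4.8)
The plan is to reduce the whole statement to a single substantive point, the finiteness of $d_1(m)$, after disposing of the formal parts.

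First I would record the soft structural facts. For fixed even $m\ge 6$ and $n=3$, each defining set of $d$'s is upward closed: if $\A$ is SOS (resp.\ PSD) for every $\vv$ with $v_0\ge d\,v_m$, the same holds for every $d'\ge d$, since $\{v_0\ge d'v_m\}\subseteq\{v_0\ge d\,v_m\}$. Hence $d_1(m),d_2(m)$ are well-defined infima in $[0,+\infty]$ and $d_2(m)\ge 0$ is immediate. Because an SOS tensor is PSD, the SOS-admissible set of $d$'s sits inside the PSD-admissible set, giving $d_2(m)\le d_1(m)$. For part (1), when $v_m=0$ the constraint $v_0\ge d\,v_m$ reduces to $v_0\ge0$, and by \reff{e2-1} we have $f(\x)=v_0(x_1^m+x_3^m)=(v_0^{1/2}x_1^{m/2})^2+(v_0^{1/2}x_3^{m/2})^2$, which is SOS (hence PSD) for every such $\vv$; thus every $d>0$ is admissible and $d_1(m)=d_2(m)=0$.

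Next I assume $v_m>0$. By homogeneity in $(v_0,v_m)$ the SOS/PSD property depends only on the ratio $r:=v_0/v_m$, so I normalize $v_m=1$ and write, using \reff{e2-1} with $n=3$, $f(\x)=r\,p(\x)+Q(\x)$ where $p(\x)=x_1^m+x_3^m$ and $Q(\x)=\sum_{t=0}^{m/2}c_t\,(x_1x_3)^t x_2^{m-2t}$ with $c_t={m\choose 2t}{2t\choose t}>0$. Finiteness of $d_2(m)$ is then a short compactness argument on the sphere $\|\x\|=1$: there $p$ vanishes only where $x_1=x_3=0$, and at those points $Q=c_0x_2^m=x_2^m>0$; so $Q$ is bounded and is bounded below by a positive constant on a neighbourhood of that vanishing set, while $p$ is bounded below by a positive constant off that neighbourhood. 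Taking $r$ large enough makes $f\ge0$ on the sphere, hence everywhere, so $d_2(m)<+\infty$.

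The substance is $d_1(m)<+\infty$, i.e.\ that $f$ is SOS once $r$ is large. It suffices to produce a single finite $c$ with $c\,p+Q$ SOS: then for $r\ge c$ one has $f=(r-c)p+(c\,p+Q)$, a sum of two SOS forms, so $d_1(m)\le c$. Concretely this means exhibiting a positive semi-definite Gram matrix, indexed by the degree-$m/2$ monomials in $x_1,x_2,x_3$, reproducing the coefficients of $c\,p+Q$. The even-$t$ monomials $(x_1x_3)^t x_2^{m-2t}$ are perfect squares and cause no trouble; the obstruction is the odd-$t$ cross monomials, aggravated by the fact that the pure power $x_2^m$ carries the tiny coefficient $c_0=1$ while its neighbour $x_1x_3x_2^{m-2}$ carries the large coefficient $c_1={m\choose 2}\!\cdot 2$. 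A naive term-by-term completion, writing each $c_t(x_1x_3)^tx_2^{m-2t}$ against its adjacent even monomials, over-draws on this small $x_2^m$ budget and fails; the correct certificate must exploit the several distinct factorizations of each cross monomial as a product of degree-$m/2$ monomials (for instance $x_1x_3x_2^{m-2}=(x_1x_2^{m/2-1})(x_3x_2^{m/2-1})$ as well as $x_2^{m/2}\cdot x_1x_3x_2^{m/2-2}$), so as to spread the demand across the whole Gram matrix, with the adjustable coefficient $c$ supplying mass only in the corner entries tied to $x_1^{m/2}$ and $x_3^{m/2}$.

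I expect this construction to be the main obstacle, and a clean way to organize it is the dual route. Since the SOS cone $\Sigma$ is closed and convex and $p\in\Sigma$, the set $\{c:c\,p+Q\in\Sigma\}$ is upward closed, and it is empty only if, by separation and a normalization/compactness argument on $\{L\in\Sigma^*:\|L\|=1\}$, there is a nonzero $L$ in the dual cone $\Sigma^*$ with $L(p)=0$ and $L(Q)\le0$. Now $L\in\Sigma^*$ means the bilinear form $(g,h)\mapsto L(gh)$ on degree-$m/2$ forms is positive semi-definite, so $L(p)=L(x_1^m)+L(x_3^m)=0$ forces both $x_1^{m/2}$ and $x_3^{m/2}$ into its kernel. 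The heart of the matter is to show that this kernel constraint already forces $L(Q)>0$ for every such nonzero $L$ (it does for point evaluations, where $L(p)=0$ pins $x_1=x_3=0$ and $L(Q)=x_2^m>0$); establishing this implication rules out the separating functional and yields $d_1(m)\le c<+\infty$. Everything else in the theorem is formal; this positivity of $L(Q)$ on the kernel locus is where the real work lies.
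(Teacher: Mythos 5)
Your reductions are sound as far as they go: the upward-closedness of the admissible sets, $d_2(m)\le d_1(m)$ from SOS $\Rightarrow$ PSD, part (1) when $v_m=0$, the compactness argument giving $d_2(m)<+\infty$, and the observation that $d_1(m)<+\infty$ follows once a single finite $c$ with $c\,p+Q\in\Sigma$ is exhibited (writing $f=(r-c)p+(c\,p+Q)$ for $r\ge c$). But that last step is the entire substance of the theorem, and your proposal never establishes it. The Gram-matrix discussion is heuristic, and the dual route ends by \emph{naming} the needed lemma --- that every nonzero $L\in\Sigma^*$ whose moment matrix kills $x_1^{m/2}$ and $x_3^{m/2}$ satisfies $L(Q)>0$ --- and declaring it ``where the real work lies,'' without proving it. So the proof is not complete. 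Worse, the lemma as you state it is strictly stronger than what the theorem asserts: from the (true) conclusion $c\,p+Q\in\Sigma$ one only gets $L(Q)\ge 0$ on the kernel locus, so your route additionally requires ruling out a nonzero $L$ with $L(p)=L(Q)=0$ (equivalently, showing $c\,p+Q$ meets the relative interior of $\Sigma$ in the appropriate sense); this degenerate case is a real obstacle for the separation argument and is not addressed.

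For comparison, the paper's proof is constructive and elementary, and in fact contradicts your diagnosis that a ``term-by-term completion \ldots fails.'' It splits $f=f_1+f_2+f_3$ with
\[
f_1(\x)={v_m \over 2}\sum_{p=1}^{m/2} \textstyle {m\choose p}{m-p\choose m-2p}\,x_2^{m-2p}\bigl(x_1^p+x_3^p\bigr)^2 ,
\]
so every cross monomial $(x_1x_3)^p x_2^{m-2p}$ is absorbed into a manifest square by pairing $x_1^p x_2^{(m-2p)/2}$ with $x_3^p x_2^{(m-2p)/2}$ (precisely the alternative factorization you alluded to), at the cost of extra diagonal terms $x_2^{m-2p}x_1^{2p}$ and $x_2^{m-2p}x_3^{2p}$ appearing negatively in $f_2,f_3$. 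Each of $f_2,f_3$ is then a two-variable diagonal-minus-tail form, made PSD by a \emph{weighted} arithmetic--geometric inequality with free parameters $\delta(p),\Delta(p)$ satisfying $\Delta(p)^{2p/m}\delta(p)^{(m-2p)/m}={m\choose p}{m-p\choose m-2p}$ and $\sum_p \frac{m-2p}{m}\delta(p)<1$, and hence SOS by the Fidalgo--Kovacec theorem. The point you missed is that the small coefficient of $x_2^m$ is not a binding budget: the weights let one spend arbitrarily little $x_2^m$ and push the cost onto $x_1^m$ and $x_3^m$, which is affordable exactly because $v_0\ge d\,v_m$ with $d$ at our disposal. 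This yields the explicit bound $d_1(m)\le \sum_{p=1}^{m/2}\frac{p}{m}\Delta(p)<+\infty$, with no duality needed.
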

\begin{proof}
If $v_m=0$, it is clear that $f(\x)=v_0x_1^m + v_0x_3^m$ is  SOS (and in particular PSD) because $v_0,v_{2m} \ge 0$ and $m$ is even. Thus, $d_1(m)=d_2(m)=0$ in this case.
Now, let us consider the case where $v_m>0$.
  We rewrite (\ref{e2-1}) as
$$f(\x) = f_1(\x) + f_2(\x) + f_3(\x),$$
where
$$f_1(\x)={v_m \over 2}\sum_{p=1}^{m \over 2} \textstyle {m\choose p} \textstyle {m-p\choose m-2p}x_2^{m-2p}\left(x_1^p + x_3^p\right)^2,$$
$$f_2(\x) = v_0x_1^m + {v_m \over 2}x_2^m - {v_m \over 2}\sum_{p=1}^{m \over 2} \textstyle {m\choose p} \textstyle {m-p\choose m-2p}x_2^{m-2p}x_1^{2p}$$
and
$$f_3(\x) = v_0x_3^m + {v_m \over 2}x_2^m - {v_m \over 2}\sum_{p=1}^{m \over 2} \textstyle {m\choose p} \textstyle {m-p\choose m-2p}x_2^{m-2p}x_3^{2p}.$$
Clearly, $f_1(\x)$ is PSD and SOS. We now consider the terms in $f_2(\x)$.
For each $p = 1, \ldots, {m \over 2}$, choose a positive constant
   $\delta(p)$ such that
   \[
   1-\sum_{p=1}^{m \over 2} \frac{m-2p}{m} \delta(p) > 0.
   \]
   For each $p = 1, \ldots, {m \over 2}$, let $\Delta(p)$ be another positive constant such that
   \[
    \Delta(p)^{\frac{2p}{m}}\delta(p)^{\frac{m-2p}{m}}=\textstyle {m\choose p} \textstyle {m-p\choose m-2p}.
   \]
   Then, by the arithmetic-geometric inequality, for each $p = 1, \ldots, {m \over 2}$,
   \[
    \textstyle {m\choose p} \textstyle {m-p\choose m-2p}x_2^{m-2p}x_1^{2p} = \bigg(  \delta(p) x_2^m \bigg)^{\frac{m-2p}{m}}\bigg( \Delta(p) x_1^m \bigg)^{\frac{2p}{m}}
    \le \frac{m-2p}{m} \bigg(  \delta(p) x_2^m \bigg) +{\frac{2p}{m}}\bigg( \Delta(p)x_1^m \bigg).
   \]
This shows that, for each $p = 1, \ldots, {m \over 2}$,
\[
\frac{m-2p}{m} \bigg(  \delta(p) x_2^m \bigg) +{\frac{2p}{m}}\bigg( \Delta(p)x_1^m \bigg)-{m\choose p} \textstyle {m-p\choose m-2p}x_2^{m-2p}x_1^{2p} 
\]
is a PSD diagonal minus tail form, and hence SOS \cite{FK}. 
Note that $f_2$ can be written as 
\begin{eqnarray*}
f_2(\x) &=&  {v_{m} \over 2}\bigg(1-\sum_{p=1}^{m \over 2} \frac{m-2p}{m} \delta(p)\bigg) x_2^m  \\
& & + {v_m \over 2} \, \sum_{p=1}^{m \over 2}  \bigg(\frac{m-2p}{m} \delta(p) x_2^m  -
\textstyle {m\choose p} \textstyle {m-p\choose m-2p}x_2^{m-2p}x_1^{2p} +{\frac{2p}{m}} \, \Delta(p) x_1^m \bigg) \\
& & + v_0x_1^m
- {v_m \over 2} \sum_{p=1}^{m \over 2}{\frac{2p}{m}} \Delta(p) x_1^m.
\end{eqnarray*}
Therefore, if
\[
v_0 \ge  \sum_{p=1}^{m \over 2}{\frac{p}{m}} \Delta(p) v_m,
\]
then $f_2$ is PSD and SOS.   Similarly, we may show that under the same condition, $f_3$ is also PSD and SOS. These, in particular, show that
\[
0 \le d_2(m) \le d_1(m) \le  \sum_{p=1}^{m \over 2}{\frac{p}{m}} \Delta(p)<+\infty.
\]

\end{proof}

We consider the simple case that the tensors $\A$ are sixth order three dimensional truncated Hankel tensors. Here we allow $v_0 \not = v_{12}$.  We give a necessary and sufficient condition that the sixth order three dimensional truncated Hankel tensors to be PSD, and show that such tensors are PSD if and only if they are SOS.

 The sixth order three dimensional truncated Hankel tensor $\A$ is generated by $\vv = (v_0, 0, 0, 0, 0, 0, v_6, 0, 0, 0, 0, 0, v_{12})^\top$.
Now, (\ref{e2-1}) and (\ref{e2-2}) have the simple form
\begin{equation} \label{e2-6}
f(\x) = v_0x_1^6 + v_6(x_2^6 +30x_1x_2^4x_3 +90x_1^2x_2^2x_3^2 + 20x_1^3x_3^3) + v_{12}x_3^6
\end{equation}
and
\begin{equation} \label{e2-7}
g(\y) = v_0y_1^2 + v_6(y_4^2 +2y_1y_7 +2y_2y_6 + 2y_3y_5) + v_{12}y_7^2.
\end{equation}

\begin{Theorem} \label{t2-2}
Suppose that $\A$ is a sixth order three dimensional truncated Hankel tensor, the following statements are equivalent:
\begin{itemize}
 \item[{\rm (i)}] The truncated Hankel tensor $\A$ is a PSD Hankel tensor;
 \item[{\rm (ii)}] The truncated Hankel tensor $\A$ is an SOS Hankel tensor;
 \item[{\rm (iii)}] The relation (\ref{e1-3}) holds and \begin{equation} \label{e2-8}
\sqrt{v_0v_{12}} \ge (560 +70\sqrt{70})v_6.
\end{equation}
\end{itemize}
Furthermore, the truncated Hankel tensor $\A$ is positive definite if and only if $v_0,v_6,v_{12} > 0$ and strict inequality holds in (\ref{e2-8}).
\end{Theorem}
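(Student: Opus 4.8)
The plan is to prove the cycle by establishing the trivial implication (ii)$\Rightarrow$(i) (every SOS form is PSD), then the sharp analytic equivalence (i)$\Leftrightarrow$(iii), and finally the upgrade (iii)$\Rightarrow$(ii) from positive semi-definiteness to an explicit sum-of-squares certificate. Throughout I may assume $v_6>0$, since $v_6=0$ is covered by Proposition \ref{p2-1}: then $f=v_0x_1^6+v_{12}x_3^6$ is SOS iff $v_0,v_{12}\ge0$, which is exactly (iii). Two normalizations simplify everything. First, scale so that $v_6=1$. Second, the substitution $x_1\mapsto\lambda x_1,\ x_3\mapsto\lambda^{-1}x_3$ is an invertible linear change of variables, so it preserves both the PSD and the SOS property, fixes $v_6$, and sends $(v_0,v_{12})\mapsto(\lambda^6v_0,\lambda^{-6}v_{12})$, hence preserves $v_0v_{12}$. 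When $v_0,v_{12}>0$ a suitable $\lambda$ makes $v_0=v_{12}=\sqrt{v_0v_{12}}=:c$; and if $v_6>0$ but $v_0v_{12}=0$, one checks directly that $f$ is unbounded below (so (i) fails) while (iii) fails as well, so the equivalence is trivially true there. Thus it suffices to treat
\[
f(\x)=c\,(x_1^6+x_3^6)+x_2^6+30x_1x_2^4x_3+90x_1^2x_2^2x_3^2+20x_1^3x_3^3,\qquad c\ge0,
\]
and to show that each of (i), (ii) holds iff $c\ge 560+70\sqrt{70}$.

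For (i)$\Leftrightarrow$(iii) I would reduce $f$ to a single real variable. With $p=x_1x_3$, the sharp inequality $x_1^6+x_3^6\ge2|x_1x_3|^3$ (which is just $(x_1^3\mp x_3^3)^2\ge0$) has equality when $|x_1|=|x_3|$ and the sign of $x_1x_3$ is prescribed. Since every monomial besides $x_1^6,x_3^6$ depends on $\x$ only through $p$ and $x_2$, this yields
\[
\min_{\x}f=\min_{p,x_2}\Big(2c|p|^3+x_2^6+30px_2^4+90p^2x_2^2+20p^3\Big),
\]
and after dehomogenizing (setting $x_2=1$, the locus $x_2=0$ being recovered as $|p|\to\infty$) the branch $p\ge0$ is automatically nonnegative, while $p=-q<0$ requires $\psi(q):=(2c-20)q^3+90q^2-30q+1\ge0$ for all $q\ge0$. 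So $f$ is PSD iff $\psi\ge0$ on $[0,\infty)$. The threshold is located by forcing a double root: eliminating the leading coefficient between $\psi(q_0)=0$ and $\psi'(q_0)=0$ gives the clean relation $30q_0^2-20q_0+1=0$, whence $q_0=\tfrac{10-\sqrt{70}}{30}$ (the root keeping $2c-20\ge0$), and back-substitution gives $2c-20=1100+140\sqrt{70}$, i.e. $c=560+70\sqrt{70}$. For $c>10$ the cubic $\psi$ has a unique positive critical point, a local minimum whose value is strictly increasing in $c$ (since $\partial_c\psi=2q^3>0$) and vanishes at $c=560+70\sqrt{70}$; for $c\le10$, $\psi$ is unbounded below. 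This gives (i)$\Leftrightarrow$(iii), and the same computation with strict inequality, together with the fact that equality in the AM--GM step forces $|x_1|=|x_3|$, yields the positive-definiteness statement.

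For the remaining and hardest step (iii)$\Rightarrow$(ii) I would produce an explicit SOS decomposition. The reduction above does \emph{not} transfer, because the certificate it gives is built from $|x_1x_3|^3$ and is genuinely piecewise (one uses $(x_1^3-x_3^3)^2$ where $x_1x_3\ge0$ but $(x_1^3+x_3^3)^2$ where $x_1x_3<0$) -- exactly the phenomenon that separates PSD from SOS for ternary sextics. My plan is therefore to reduce to the threshold: for $c>c^\ast:=560+70\sqrt{70}$ one has $f=f_{c^\ast}+(c-c^\ast)\big((x_1^3)^2+(x_3^3)^2\big)$, so it suffices to find an SOS representation of $f_{c^\ast}$. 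I would search for it via its Gram matrix in the basis of cubic monomials, block-diagonalized by the symmetries $x_1\leftrightarrow x_3$ and $(x_1,x_3,x_2)\mapsto(-x_1,-x_3,x_2)$, which split the monomials into the parity classes $\{x_2^3,x_1x_2x_3,x_1^2x_2,x_2x_3^2\}$ and $\{x_1^3,x_3^3,x_1^2x_3,x_1x_3^2,x_1x_2^2,x_2^2x_3\}$. The decisive extra constraint is that $f_{c^\ast}$ attains $0$ at the point $\x^\ast=(\sqrt{q_0},1,-\sqrt{q_0})$ found above (and its orbit), so every square in any representation must vanish at $\x^\ast$; equivalently the Gram matrix is singular with the cubic-monomial vector of $\x^\ast$ in its kernel. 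Imposing this, matching the six nonzero coefficients of $f_{c^\ast}$, and requiring the small symmetry-reduced blocks to be positive semi-definite leaves a finite feasibility problem whose solution furnishes the squares. I expect this explicit construction of a singular PSD Gram matrix -- not the analytic part -- to be the main obstacle, since it is precisely where the non-genericity of the Hilbert case $m=6$, $n=3$ must be handled by hand.
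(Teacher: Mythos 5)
Your treatment of (ii)$\Rightarrow$(i) and of the equivalence (i)$\Leftrightarrow$(iii) is sound, and the latter is in fact a nice, self-contained alternative to the paper's argument: the reduction $x_1^6+x_3^6\ge 2|x_1x_3|^3$ collapses $f$ to the one-variable cubic $\psi(q)=(2c-20)q^3+90q^2-30q+1$, and your double-root computation ($30q_0^2-20q_0+1=0$, $q_0=\tfrac{10-\sqrt{70}}{30}$, $c^\ast=560+70\sqrt{70}$) correctly recovers the threshold in (\ref{e2-8}) and gives \emph{both} directions of (i)$\Leftrightarrow$(iii) at once, whereas the paper only proves (i)$\Rightarrow$(iii) by substituting the point $\bar\x=(v_{12}^{1/6},\sqrt{t}(v_0v_{12})^{1/12},-v_0^{1/6})$ with $t=10+\sqrt{70}$ and obtains the converse only by going around the cycle through SOS.

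However, there is a genuine gap at the crux of the theorem: you do not prove (iii)$\Rightarrow$(ii). You correctly reduce it to exhibiting an SOS certificate for the threshold form $f_{c^\ast}$, but then you only \emph{describe} a search (a symmetry-reduced Gram matrix, singular because $f_{c^\ast}$ vanishes at $(\sqrt{q_0},1,-\sqrt{q_0})$, leading to ``a finite feasibility problem whose solution furnishes the squares'') and you explicitly acknowledge that solving it is the main obstacle. Since $m=6$, $n=3$ is exactly the Motzkin case where PSD forms need not be SOS, this step cannot be waved through: without producing the Gram matrix (or some other certificate) the proof of (iii)$\Rightarrow$(ii), and hence of the equivalence of (i) and (ii), is missing. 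The paper closes precisely this gap constructively and without any Gram-matrix search: it peels off two explicit squares,
\[
f(\x) = 10v_6\left(\left(\tfrac{v_0}{v_{12}}\right)^{1/4}x_1^3+\left(\tfrac{v_{12}}{v_0}\right)^{1/4}x_3^3\right)^2
+v_6\left(\sqrt{\tfrac{10-\sqrt{70}}{2}}\,x_2^3+\sqrt{150+15\sqrt{70}}\,x_1x_2x_3\right)^2+f_1(\x),
\]
and observes that the remainder $f_1$ in (\ref{e2-9}) is a \emph{diagonal minus tail} form, which is PSD under (\ref{e2-8}) by the arithmetic--geometric inequality (see (\ref{e2-10})), and every PSD diagonal-minus-tail form is SOS by the theorem of Fidalgo and Kovacec \cite{FK}. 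Note that this decomposition works uniformly, including at the threshold, so your concern that a single (non-piecewise) certificate might be hard to produce is resolved by routing the ``hard'' part through \cite{FK} rather than through an explicit singular Gram matrix; adopting that device would complete your proof. Your positive-definiteness sketch is acceptable in outline but would likewise need the equality analysis written out, as the paper does via (\ref{e2-9}) and (\ref{e2-10}).
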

\begin{proof}
$[{\rm (i)} \Rightarrow {\rm (iii)}]$ Suppose that $\A$ is PSD, then clearly $v_0, v_6, v_{12} \ge 0$. To see {\rm (iii)}, we only need to show  (\ref{e2-8}) holds.  Let $t \ge 0$ and let $\bar \x = (\bar x_1, \bar x_2, \bar x_3)^\top$, where
$$\bar x_1 = v_{12}^{1 \over 6},\  \bar x_2 = \sqrt{t}(v_0v_{12})^{1 \over 12}, \
\bar x_3 = - v_0^{1 \over 6}.$$
Substitute them to (\ref{e2-6}).  If $\A$ is PSD, then $f(\bar \x) \ge 0$.   It follows from (\ref{e2-6}) that
$$v_0v_{12} + v_6(t^3 -30t^2 + 90t - 20)\sqrt{v_0v_{12}} + v_0v_{12} \ge 0.$$
From this, we have
$$\sqrt{v_0v_{12}} \ge {-t^3 +30t^2 -90t +20 \over 2}v_6.$$
Substituting $t = 10 + \sqrt{70}$ to it, we have (\ref{e2-8}).

$[{\rm (iii)} \Rightarrow {\rm (ii)}]$ We now assume that (\ref{e1-3}) and (\ref{e2-8}) hold.   We will show that $\A$ is SOS.
If $v_6 = 0$, then by Proposition \ref{p2-1}, $\A$ is an SOS Hankel tensor.  Assume that $v_6 > 0$.   By (\ref{e2-8}), $v_0 > 0$ and $v_{12} > 0$.  We now have
$$f(\x) = 10v_6 \left(\left({v_0 \over v_{12}}\right)^{1 \over 4}x_1^3 + \left({v_{12} \over v_0}\right)^{1 \over 4}x_3^3\right)^2 + v_6\left( \sqrt{10-\sqrt{70} \over 2}x_2^3 + \sqrt{150+15\sqrt{70}}x_1x_2x_3\right)^2 +f_1(\x),$$
where
\begin{equation} \label{e2-9}
f_1(\x) = \left(v_0 -10v_6\left({v_0 \over v_{12}}\right)^{1 \over 2}\right)x_1^6 + {\sqrt{70}-8 \over 2}v_6x_2^6 + \left(v_{12} -10v_6\left({v_{12} \over v_0}\right)^{1 \over 2}\right)x_3^6 - (60+15\sqrt{70})v_6x_1^2x_2^2x_3^2.
\end{equation}
We see that $f_1(\x)$ is a diagonal minus tail form \cite{FK}.   By the arithmetic-geometric inequality, we have
$$\left(v_0 -10v_6\left({v_0 \over v_{12}}\right)^{1 \over 2}\right)x_1^6 + {\sqrt{70}-8 \over 2}v_6x_2^6 + \left(v_{12} -10v_6\left({v_{12} \over v_0}\right)^{1 \over 2}\right)x_3^6$$
$$\ge 3\left({\sqrt{70}-8 \over 2}v_6(\sqrt{v_0v_{12}} - 10v_6)^2\right)^{1 \over 3}x_1^2x_2^2x_3^2.$$
By (\ref{e2-8}),
\begin{equation} \label{e2-10}
 3\left({\sqrt{70}-8 \over 2} v_6 (\sqrt{v_0v_{12}} - 10v_6)^2\right)^{1 \over 3}x_1^2x_2^2x_3^2 \ge (60+15\sqrt{70})v_6x_1^2x_2^2x_3^2.
 \end{equation}
Thus, $f_1$ is a PSD diagonal minus tail form.   By \cite{FK}, $f_1$ is an SOS polynomial.   Hence, $f$ is also an SOS polynomial if (\ref{e1-3}) and (\ref{e2-8}) hold.

$[{\rm (ii)} \Rightarrow {\rm (i)}]$ This implication is direct by the definition.

We now prove the last conclusion of this theorem.   First, we assume that $\A$ is positive definite.  Then, $v_6 = f(\e_2) > 0$ as $\e_2 \not = \0$. Similarly, $v_0=f(\e_1)>0$ and $v_{12}=f(\e_3)>0$.  Note that in the above $[{\rm (i)} \Rightarrow {\rm (iii)}]$ part, $f(\bar \x)>0$ as $\bar \x \not = \0$.   Then strict inequality holds for the last two inequalities in
the above $[{\rm (i)} \Rightarrow {\rm (iii)}]$ part.  This implies that
strict inequality holds in (\ref{e2-8}).

On the other hand, assume that $v_0,v_6,v_{12} > 0$ and strict inequality holds in (\ref{e2-8}).  
 Let $\x = (x_1, x_2, x_3)^\top \not = \0$.   If $x_1 \not = 0, x_2 \not = 0$ and $x_3 \not = 0$, then strict inequality holds in (\ref{e2-10}) as $v_6 > 0$ and strict inequality holds in (\ref{e2-8}).   Then $f_1(\x ) > 0$.   If $x_2 \not = 0$ but $x_1x_3 = 0$, then
from (\ref{e2-9}), we still have $f_1(\x) > 0$.   If $x_2 = 0$ and one of $x_1$ and $x_3$ are nonzero, then we still have $f_1(\x) > 0$ by (\ref{e2-9}).   Thus, we always have $f_1(\x) > 0$ as long as $\x \not = \0$.   This implies $f(\x) > 0$ as long as $\x \not = \0$.   Hence, $\A$ is positive definite.
\end{proof}

\section{Quasi-Truncated Hankel Tensors}
\hspace{4mm}
In this section, we consider the case that the Hankel tensor $\A$ is generated by $\vv = (v_0, v_1, 0, \cdots, 0, v_{\frac{(n-1)m}{2}} , 0, \cdots, 0, v_{(n-1)m-1}, v_{(n-1)m})^\top$ where $n$ is odd.   Adding $v_1$ and $v_{(n-1)m-1}$ to the case in the last section, we get this case.   We call
such a Hankel tensor a {\bf quasi-truncated Hankel tensor}.   Hence, truncated Hankel tensors are quasi-truncated Hankel tensors.

Since we are only concerned about PSD Hankel tensors, we may assume that (\ref{e1-3}) holds.
Now, (\ref{e1-2}) and $g(\y)$ have the simple form
\begin{eqnarray} \label{e3-1}
f(\x)&=& v_0x_1^m+m v_1x_1^{m-1}x_2+m v_{(n-1)m-1}x_{n-1}x_n^{m-1}+ v_{(n-1)m}x_n^m\nonumber\\
&&+ v_{\frac{(n-1)m}{2}}\sum \left\{\textstyle {m\choose t_1} {m-t_1\choose t_2}\cdots {m-t_1-t_2-\cdots-t_{n-2}\choose t_{n-1}} x_1^{t_1}x_2^{t_2}\cdots x_n^{m-t_1-t_2-\cdots-t_{n-1}}\right. \nonumber\\
&&\quad \quad\quad \quad \quad \quad \left. : (n-1)t_1+(n-2)t_2+\cdots +t_{n-1}=\textstyle\frac{(n-1)m}{2}\right\}.\nonumber\\
\end{eqnarray}
and
\begin{eqnarray} \label{e3-2}
g(\y) &=& v_0y_1^2+2v_1y_1y_2+2v_{(n-1)m-1}y_{\frac{(n-1)m+2}{2}}y_{\frac{(n-1)m}{2}} +v_{(n-1)m}y_{\frac{(n-1)m+2}{2}}^2\nonumber\\
&&+ v_{\frac{(n-1)m}{2}}\left(y^2_{\frac{(n-1)m}{4}+1}+\sum_{i \neq j}\left\{y_i y_j:i+j=\textstyle\frac{(n-1)m}{2}+2\right\}\right).
\end{eqnarray}

We first show that a result that Proposition \ref{p2-1} continues to hold in this case.

\begin{Proposition} \label{p3-1}
Suppose that (\ref{e1-3}) holds and $m$ is even. If $v_{\frac{(n-1)m}{2}} = 0$, then the quasi-truncated Hankel tensor $\A$ is PSD if and only if $v_1 = v_{(n-1)m-1} = 0$.   In this case, $\A$ is a strong Hankel tensor and an SOS Hankel tensor.   If $v_{\frac{(n-1)m}{2}} > 0$, then $\A$ is not a strong Hankel tensor.
\end{Proposition}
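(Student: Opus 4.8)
The plan is to establish the three assertions of the Proposition separately, leaning on the machinery of Proposition~\ref{p2-1} wherever possible.

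I would first dispatch the last claim, that $v_{\frac{(n-1)m}{2}}>0$ rules out $\A$ being a strong Hankel tensor. The point is that the test vector of Proposition~\ref{p2-1} still works unchanged. Take $\bar\y=\e_i-\e_j$ with $i+j=\frac{(n-1)m}{2}+2$, $i\neq j$, and $i,j\notin\{1,\frac{(n-1)m+2}{2}\}$. Then $\bar y_1=0$ and $\bar y_{\frac{(n-1)m+2}{2}}=0$, so in \reff{e3-2} the diagonal terms $v_0\bar y_1^2$, $v_{(n-1)m}\bar y_{\frac{(n-1)m+2}{2}}^2$ \emph{and} the two new cross terms $2v_1\bar y_1\bar y_2$, $2v_{(n-1)m-1}\bar y_{\frac{(n-1)m+2}{2}}\bar y_{\frac{(n-1)m}{2}}$ all vanish at once. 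The self-paired square $\bar y^2_{\frac{(n-1)m}{4}+1}$ vanishes as well, since $\frac{(n-1)m}{4}+1$ is the midpoint of $\{i,j\}$ and $i\neq j$. Only the off-diagonal contribution survives, giving $g(\bar\y)=-2v_{\frac{(n-1)m}{2}}<0$, so no associated Hankel matrix is PSD. One should record that an admissible pair $\{i,j\}$ indeed exists, which it does in every non-degenerate case (the lone exception, $m=2,n=3$, being one where the quasi-truncated structure imposes no constraint at all).

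Next I would prove the equivalence assuming $v_{\frac{(n-1)m}{2}}=0$. The ``if'' direction is immediate: when also $v_1=v_{(n-1)m-1}=0$, formula \reff{e3-1} collapses to $f(\x)=v_0x_1^m+v_{(n-1)m}x_n^m$, which is nonnegative since $m$ is even and \reff{e1-3} gives $v_0,v_{(n-1)m}\geq0$. For the ``only if'' direction I would isolate each offending odd monomial by restricting to a coordinate pair. Setting all coordinates except $x_1,x_2$ to zero annihilates the middle block and the $x_n$-terms (also when $n=3$, where $x_n=x_3=0$), leaving $f=v_0x_1^m+mv_1x_1^{m-1}x_2$; fixing $x_1=1$ turns this into the affine map $v_0+mv_1x_2$, whose nonnegativity on all of $\Re$ forces $v_1=0$. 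The mirror restriction to $x_{n-1},x_n$ forces $v_{(n-1)m-1}=0$ in the same way.

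The ``strong Hankel and SOS'' clause then follows with no extra work: once $v_1=v_{(n-1)m-1}=v_{\frac{(n-1)m}{2}}=0$, equation \reff{e3-2} reads $g(\y)=v_0y_1^2+v_{(n-1)m}y_{\frac{(n-1)m+2}{2}}^2\geq0$, so this associated Hankel matrix is PSD and $\A$ is a strong Hankel tensor, hence SOS because $m$ is even; equivalently one invokes Proposition~\ref{p2-1} with a vanishing middle entry. The main obstacle, I expect, is the bookkeeping in the ``only if'' direction --- verifying that zeroing out the remaining coordinates really does leave a single linear term in $x_2$ (respectively $x_{n-1}$), which needs the $n=3$ case, where the index $n-1$ collides with $2$, to be handled explicitly.
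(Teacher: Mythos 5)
Your proof is correct and follows essentially the same route as the paper: the same test vector $\bar\y=\e_i-\e_j$ (with $i+j=\frac{(n-1)m}{2}+2$, $i,j\notin\{1,\frac{(n-1)m+2}{2}\}$) handles the strong-Hankel claim, and the necessity of $v_1=v_{(n-1)m-1}=0$ comes from restricting to two coordinates and exploiting that $f$ is then linear in $x_2$ (resp.\ $x_{n-1}$) --- the paper makes the identical observation, merely instantiating it with explicit test points $(1,-v_1,0,\cdots,0)^\top$ and $(1,-v_0/v_1,0,\cdots,0)^\top$ in the cases $v_0=0$ and $v_0>0$ instead of your affine-function phrasing. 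Your extra remark that an admissible pair $\{i,j\}$ exists except when $(m,n)=(2,3)$ flags a degenerate case the paper's proof silently glosses over, and is a worthwhile addition.
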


\begin{proof}
Suppose that $v_{\frac{(n-1)m}{2}} =0$.   Assume that $v_1 \not = 0$.  If $v_0 = 0$, consider $\hat \x = (1, -v_1, 0,\cdots,0)^\top$.  Then $f(\hat \x)=-mv_1^2 < 0$.   If $v_0 > 0$, consider $\tilde \x = (1, -{v_0 \over v_1}, 0,\cdots,0)^\top$.  Then $f(\tilde \x) = (1-m)v_0 < 0$.  Thus, $\A$ is not PSD in these two cases.  Similar discussion holds for the case that $v_{(n-1)m-1} = 0$.   Assume now that $v_1 = v_{(n-1)m-1} = 0$.  By Proposition \ref{p2-1}, we see that the quasi-truncated Hankel tensor $\A$ is a strong Hankel tensor and an SOS Hankel tensor in this case.    This proves the first part of this proposition.

 Suppose that $v_{\frac{(n-1)m}{2}} > 0$.   Consider $\bar \y = \e_i-\e_j$ where $i+j=\textstyle\frac{(n-1)m}{2}+2$ ,$i \neq j$ and $i\neq 1$ or $\frac{(n-1)m+2}{2}$.  We see that $g(\bar \y) = -2 v_{\frac{(n-1)m}{2}} < 0$.   Hence $\A$ is not a strong Hankel tensor in this case.
\end{proof}

We consider the simple case that the tensors $\A$ are sixth order three dimensional quasi-truncated Hankel tensors.
We give a necessary condition that the sixth order three dimensional quasi-truncated Hankel tensors to be PSD, and a sufficient condition that the sixth order three dimensional quasi-truncated Hankel tensors to be SOS.

The sixth order three dimensional quasi-truncated Hankel tensor $\A$ is generated by $\vv = (v_0, v_1, 0, 0, 0, 0, v_6, 0, 0, 0, 0, v_{11}, v_{12})^\top \in \Re^{13}$. (\ref{e3-1}) and (\ref{e3-2}) have the simple form
\begin{equation} \label{e3-01}
f(\x) = v_0x_1^6 + 6v_1x_1^5x_2 + v_6(x_2^6 +30x_1x_2^4x_3 +90x_1^2x_2^2x_3^2 + 20x_1^3x_3^3) + 6v_{11}x_2x_3^5 + v_{12}x_3^6,
\end{equation}
and
\begin{equation} \label{e3-02}
g(\y) = v_0y_1^2 + 2v_1y_1y_2 + v_6(y_4^2 +2y_1y_7 +2y_2y_6 + 2y_3y_5)
+ 2v_{11}y_6y_7 + v_{12}y_7^2.
\end{equation}

To present a necessary condition for a sixth order three dimensional quasi-truncated Hankel tensor to be PSD, we first prove the following lemma.

\begin{Lemma} \label{l3-1}
Consider
$$\hat f(x_1, x_2) = v_0x_1^6 + 6v_1x_1^5x_2 + v_6x_2^6.$$
Then $\hat f$ is PSD if and only if $v_0 \ge 0$, $v_6 \ge 0$ and
\begin{equation} \label{e3-3}
|v_1| \le \left({v_0 \over 5}\right)^{5 \over 6}v_6^{1 \over 6}.
\end{equation}
\end{Lemma}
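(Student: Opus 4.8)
The plan is to collapse this bivariate problem to a one-variable positivity question and then pin down the global minimum explicitly, which will give both directions of the equivalence at once. First I would record the two obvious necessary conditions: evaluating $\hat f$ at $(x_1,x_2)=(1,0)$ forces $v_0\ge 0$, and at $(0,1)$ forces $v_6\ge 0$, so throughout I may assume $v_0,v_6\ge 0$. Next, since $\hat f$ is homogeneous of even degree $6$, I would dehomogenise. On the line $x_1=0$ one has $\hat f(0,x_2)=v_6x_2^6\ge 0$ automatically, so it remains to test $x_1\ne 0$; writing $t=x_2/x_1$ and using $\hat f(x_1,x_2)=x_1^6\,\hat f(1,t)$ with $x_1^6>0$, positivity of $\hat f$ on $\{x_1\ne 0\}$ is equivalent to
$$\phi(t):=\hat f(1,t)=v_6t^6+6v_1t+v_0\ge 0\quad\mbox{for all }t\in\Re.$$
Thus $\hat f$ is PSD if and only if $\phi\ge 0$ on $\Re$, and everything reduces to analysing this single polynomial $\phi$.

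Then I would dispose of the degenerate cases. If $v_6=0$ the function $\phi(t)=6v_1t+v_0$ is affine, hence bounded below only when $v_1=0$; and if $v_0=0$ the term $6v_1t$ dominates $\phi$ near $t=0$, so again $\phi$ fails to be nonnegative unless $v_1=0$. In both situations the right-hand side of (\ref{e3-3}) vanishes, so the stated condition correctly reads $v_1=0$, matching the analysis.

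For the main case $v_0,v_6>0$, I would minimise $\phi$ directly. Because $v_6>0$ we have $\phi(t)\to+\infty$ as $t\to\pm\infty$, and $\phi'(t)=6v_6t^5+6v_1$ is strictly increasing, so $\phi'$ has a unique real zero $t_*$ with $t_*^5=-v_1/v_6$, which is therefore the global minimiser. Using the relation $v_6t_*^5=-v_1$ to cancel the degree-six term, the minimum value collapses to
$$\phi(t_*)=v_0+5v_1t_*.$$
Substituting $t_*=(-v_1/v_6)^{1/5}$ and simplifying gives $\phi(t_*)=v_0-5\,|v_1|^{6/5}v_6^{-1/5}$. Hence $\hat f$ is PSD precisely when $\phi(t_*)\ge 0$, i.e. $v_0 v_6^{1/5}\ge 5|v_1|^{6/5}$, and raising both sides to the power $5/6$ yields exactly (\ref{e3-3}).

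The hardest part will be the sign bookkeeping in this last computation: $t_*$ and $v_1$ always have opposite signs, so $5v_1t_*$ is negative regardless of the sign of $v_1$, and one must check in both cases $v_1>0$ and $v_1<0$ that its magnitude equals $5|v_1|^{6/5}v_6^{-1/5}$, so that $|v_1|$ rather than $v_1$ appears in the final inequality. Once this is handled, the remaining manipulations are routine.
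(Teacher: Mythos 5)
Your proof is correct, and it takes a genuinely different route from the paper. The paper argues the two directions separately: for sufficiency it splits $v_0x_1^6$ into five equal pieces and applies the arithmetic--geometric inequality to the six terms $\frac{1}{5}v_0x_1^6,\ldots,\frac{1}{5}v_0x_1^6, v_6x_2^6$ to dominate $6|v_1x_1^5x_2|$; for necessity it assumes (\ref{e3-3}) fails and exhibits explicit points where $\hat f$ is negative, treated in four cases according to the vanishing of $v_0$ and $v_6$ (e.g.\ $x_1=(5v_6)^{1/6}$, $x_2=-\frac{v_1}{|v_1|}v_0^{1/6}$ in the main case). Your dehomogenization-plus-calculus argument handles both directions at once: since $\phi'(t)=6v_6t^5+6v_1$ is strictly increasing when $v_6>0$, the unique critical point $t_*$ with $t_*^5=-v_1/v_6$ is the global minimizer, and the exact minimum value $\phi(t_*)=v_0-5|v_1|^{6/5}v_6^{-1/5}$ is nonnegative precisely when (\ref{e3-3}) holds. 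Your sign bookkeeping is right ($v_1t_*=-|v_1|^{6/5}v_6^{-1/5}$ in both sign cases), and your degenerate cases ($v_6=0$ affine, $v_0=0$ with the linear term dominating near $t=0$) correctly reduce to $v_1=0$, which is what (\ref{e3-3}) demands there. What each approach buys: the paper's AM--GM technique is the one reused throughout the paper (it is the engine behind the diagonal-minus-tail arguments in Theorems \ref{t2-1} and \ref{t2-2}), and it scales to more variables; your computation is self-contained, shows the inequality (\ref{e3-3}) is \emph{tight} by producing the exact minimum, and in fact explains the paper's seemingly magic test point --- at equality in (\ref{e3-3}) your minimizer $t_*=-\frac{v_1}{|v_1|}(|v_1|/v_6)^{1/5}$ coincides with the ratio $x_2/x_1$ of the paper's chosen point. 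The limitation of your method is that it relies on reducing to a single variable, so it would not extend directly to the three-variable settings elsewhere in the paper.
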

\begin{proof}  Suppose that $v_0 \ge 0$, $v_6 \ge 0$ and (\ref{e3-3}) holds.  Then, by the arithmetic-geometric inequality, one has
\begin{eqnarray*}
v_0x_1^6 + v_6x_2^6 & = & {1 \over 5}v_0x_1^6 + {1 \over 5}v_0x_1^6 + {1 \over 5}v_0x_1^6 + {1 \over 5}v_0x_1^6 + {1 \over 5}v_0x_1^6 + v_6x_2^6 \\
& \ge & 6\left(\left({v_0 \over 5}\right)^5x_1^{30}v_6x_2^6\right)^{1 \over 6} \\
& \ge & 6|v_1x_1^5x_2 |.
\end{eqnarray*}
This implies that $\hat f(x_1, x_2) \ge 0$ for any $(x_1, x_2)^\top \in \Re^2$, i.e., $\hat f(x_1, x_2)$ is PSD.

Suppose that $\hat f(x_1, x_2)$ is PSD.  It is easy to see that $v_0 \ge 0$ and $v_6 \ge 0$.   Assume now that (\ref{e3-3}) does not hold, i.e.,
\begin{equation} \label{e3-4}
|v_1| > \left({v_0 \over 5}\right)^{5 \over 6}v_6^{1 \over 6}.
\end{equation}
If $v_0 = v_6 = 0$, let $x_1 = 1$ and $x_2 = -v_1$.  Then $\hat f(x_1, x_2) < 0$.  We get a contradiction.   If $v_0=0$ and $v_6 \not = 0$, let
$x_1 = v_6^{1 \over 5}$ and $x_2 = -v_1^{1 \over 5}$.  Again, $\hat f(x_1, x_2) < 0$.  We get a contradiction.  Similarly, if $v_0 \not =0$ and $v_6 = 0$, we may get a contradiction.  If $v_0 \not =0$ and $v_6 \not = 0$, let
$x_1 = (5v_6)^{1 \over 6}$ and $x_2 = - {v_1 \over |v_1|}v_0^{1 \over 6}$.  Then by (\ref{e3-4}),
$$\hat f(x_1, x_2) = 6v_0v_6 - 6|v_1|(5v_6)^{5 \over 6}v_0^{1 \over 6} < 0.$$
We still get a contradiction.   This completes the proof.
\end{proof}

 We now present a necessary condition for a sixth order three dimensional quasi-truncated Hankel tensor to be PSD.

\begin{Proposition} \label{p3-2}
Suppose that (\ref{e1-3}) holds.   If $\A$ is a sixth order three dimensional PSD quasi-truncated Hankel tensor, then (\ref{e3-3}) and the following inequalities
\begin{equation} \label{e3-5}
|v_1| \le \left({v_{12} \over 5}\right)^{5 \over 6}v_6^{1 \over 6}
\end{equation}
and
\begin{equation} \label{e3-6}
\sqrt{v_0v_{12}} \ge 10 v_6
\end{equation}
hold.
If furthermore
\begin{equation} \label{e3-7}
v_1v_{12}^{5 \over 6} = v_{11}v_0^{5 \over 6},
\end{equation}
then (\ref{e2-8}) also holds.
\end{Proposition}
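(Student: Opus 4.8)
The plan is to read off each necessary inequality by evaluating the positive semi-definite form $f$ of (\ref{e3-01}) on a carefully chosen coordinate slice or one-parameter curve, and then invoking either Lemma \ref{l3-1} or elementary positivity of a binary quadratic form. No new machinery is required; the whole content is choosing the right test points. Throughout I use that positive semi-definiteness of $\A$ means $f(\x)\ge 0$ for every $\x$, so in particular $f$ is nonnegative on every slice.

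First I would obtain (\ref{e3-3}) by setting $x_3=0$. The entire $v_6$-block of (\ref{e3-01}) collapses to its $x_2^6$ term, so $f(x_1,x_2,0)=v_0x_1^6+6v_1x_1^5x_2+v_6x_2^6=\hat f(x_1,x_2)$ is exactly the binary sextic of Lemma \ref{l3-1}. Since this slice is nonnegative, Lemma \ref{l3-1} immediately yields $v_0,v_6\ge 0$ together with (\ref{e3-3}). The companion inequality (\ref{e3-5}) comes from the mirror slice $x_1=0$, where $f(0,x_2,x_3)=v_{12}x_3^6+6v_{11}x_3^5x_2+v_6x_2^6$ is once more a Lemma \ref{l3-1} sextic, now with $v_{12}$ playing the role of the endpoint coefficient; this is simply the reversal symmetry $x_1\leftrightarrow x_3$, $v_k\leftrightarrow v_{12-k}$, which preserves positive semi-definiteness, so Lemma \ref{l3-1} gives the bound of the form (\ref{e3-5}) with $v_{12}$ replacing $v_0$.

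Next I would derive (\ref{e3-6}) from the slice $x_2=0$, which annihilates both the $v_1$ and $v_{11}$ terms and all of the $v_6$-block except $20x_1^3x_3^3$, leaving $f(x_1,0,x_3)=v_0x_1^6+20v_6x_1^3x_3^3+v_{12}x_3^6$. Setting $u=x_1^3$ and $w=x_3^3$, which is a bijection of $\Re^2$ onto itself, turns this into the binary quadratic form $v_0u^2+20v_6uw+v_{12}w^2$; its nonnegativity is equivalent to $v_0\ge 0$, $v_{12}\ge 0$ and $v_0v_{12}-100v_6^2\ge 0$, and the last condition (given $v_6\ge 0$) is precisely (\ref{e3-6}).

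The final assertion is where the real idea lies, and is the step I expect to be the main obstacle. I would reuse the curve from the truncated case in Theorem \ref{t2-2}, namely $\bar\x=(v_{12}^{1/6},\ \sqrt{t}\,(v_0v_{12})^{1/12},\ -v_0^{1/6})$ with $t\ge 0$, and evaluate $f(\bar\x)$. The crucial observation is that the two additional quasi-truncated terms contribute exactly $6\sqrt{t}\,(v_0v_{12})^{1/12}\left(v_1v_{12}^{5/6}-v_{11}v_0^{5/6}\right)$, which vanishes precisely under hypothesis (\ref{e3-7}); everything else reproduces the truncated evaluation, giving $2v_0v_{12}+v_6\sqrt{v_0v_{12}}\,(t^3-30t^2+90t-20)\ge 0$. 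Dividing by $\sqrt{v_0v_{12}}$ — legitimate since by (\ref{e3-6}) the case $v_6>0$ forces $v_0,v_{12}>0$, while $v_6=0$ and $v_0v_{12}=0$ make all the claimed inequalities trivial — reduces matters to maximizing $\tfrac{1}{2}(-t^3+30t^2-90t+20)$ over $t\ge 0$. Its stationary point is $t=10+\sqrt{70}$, and substituting this value produces the constant $560+70\sqrt{70}$ and hence (\ref{e2-8}). The crux is recognizing that (\ref{e3-7}) is exactly the cancellation condition for the two cross terms along this curve; the only routine point still to be checked is that $t=10+\sqrt{70}$ indeed realizes the maximum on $[0,\infty)$.
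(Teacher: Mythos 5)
Your proposal is correct and follows essentially the same route as the paper's proof: the same three coordinate slices $x_3=0$, $x_1=0$, $x_2=0$ fed into Lemma \ref{l3-1} (with your discriminant argument for the $x_2=0$ slice just making explicit what the paper calls "easy"), and the same test curve $\bar\x$ with the same observation that (\ref{e3-7}) is exactly the condition cancelling the $v_1$ and $v_{11}$ cross terms. One remark: what the slice $x_1=0$ actually yields --- in your argument and in the paper's --- is the bound $|v_{11}|\le \left({v_{12}\over 5}\right)^{5\over 6}v_6^{1\over 6}$; the occurrence of $v_1$ in (\ref{e3-5}) as printed is a typo (the printed inequality is not invariant under the rescaling $v_k\mapsto \mu^k v_k$ that preserves positive semi-definiteness, so it cannot be a valid necessary condition), and your reading via the reversal symmetry $v_k\leftrightarrow v_{12-k}$ matches the paper's intent.
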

\begin{proof} Suppose that $\A$ is PSD.   In (\ref{e3-01}), let $x_3 = 0$.  By Lemma \ref{l3-1}, (\ref{e3-3}) holds.    In (\ref{e3-01}), let $x_1 = 0$.   By an argument similar to Lemma \ref{l3-1}, (\ref{e3-5}) holds.   In (\ref{e3-01}), let $x_2 = 0$.  Since $\A$ is PSD, we may easily get (\ref{e3-6}).

Suppose further that (\ref{e3-7}) holds. As in $[{\rm (i)} \Rightarrow {\rm (iii)}]$ part of the proof of Theorem \ref{t2-2}, we let $t \ge 0$ and let $\bar \x = (\bar x_1, \bar x_2, \bar x_3)^\top$, where
$\bar x_1 = v_{12}^{1 \over 6},\  \bar x_2 = \sqrt{t}(v_0v_{12})^{1 \over 12}, \
\bar x_3 = - v_0^{1 \over 6}.$
It follows from (\ref{e3-7}) that
\begin{equation}
6v_1\bar x_1^5\bar x_2 + 6 v_{11}\bar x_2\bar x_3^5 =0.
\end{equation}
This together with (\ref{e3-01}) implies that
\[
f(\bar \x)=v_0v_{12} + v_6(t^3 -30t^2 + 90t - 20)\sqrt{v_0v_{12}} + v_0v_{12} \ge 0.
\]
Proceed as in $[{\rm (i)} \Rightarrow {\rm (iii)}]$ part of the proof of Theorem \ref{t2-2}, we see that
 (\ref{e2-8}) holds in this case.
This completes the proof.
\end{proof}

We may also present a sufficient condition for a sixth order three dimensional quasi-truncated Hankel tensor to be SOS.
\begin{Proposition} \label{p3-4}
Let $\A$ be a sixth order three dimensional quasi-truncated Hankel tensor.
Suppose that  $v_0, v_6, v_{12} > 0$.   If there exist $t_1, t_2 > 0$ such that 
\begin{equation} \label{e3-8}
|v_1| \le {1 \over t_1} - {10v_6 \over t_1\sqrt{v_0v_{12}}}
\end{equation}
\begin{equation} \label{e3-9}
|v_{11}| \le {1 \over t_2} - {10v_6 \over t_2\sqrt{v_0v_{12}}}
\end{equation}
\begin{equation} \label{e3-10}
|v_1|\left({5 \over t_1v_0}\right)^5 +  |v_{11}|\left({5 \over t_2v_{12}}\right)^5 \le {\sqrt{70}-8 \over 2}v_6
\end{equation}
and
$$\left(v_0  - 10v_6\left({v_0 \over v_{12}}\right)^{1 \over 2} -|v_1|t_1v_0\right)\left(v_{12}  - 10v_6\left({v_{12} \over v_0}\right)^{1 \over 2} -|v_{11}|t_2v_{12}\right)$$
$$\times \left({\sqrt{70}-8 \over 2}v_6 -|v_1|\left({5 \over t_1v_0}\right)^5 - |v_{11}|\left({5 \over t_2v_{12}}\right)^5\right)$$
\begin{equation} \label{e3-11}
\ge {1 \over 27}v_6^3(60+15\sqrt{70})^3
\end{equation}
hold, then $\A$ is SOS.
\end{Proposition}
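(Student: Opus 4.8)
The plan is to build directly on the square decomposition already obtained in the proof of Theorem~\ref{t2-2}, and then to absorb the two extra monomials $6v_1x_1^5x_2$ and $6v_{11}x_2x_3^5$ into a carefully re-partitioned diagonal minus tail remainder. Write $a = v_0 - 10v_6(v_0/v_{12})^{1/2}$, $b = {\sqrt{70}-8 \over 2}v_6$, $c = v_{12} - 10v_6(v_{12}/v_0)^{1/2}$ and $d = (60+15\sqrt{70})v_6$; since $v_0,v_6,v_{12}>0$ these are exactly the coefficients appearing in the form $f_1$ of Theorem~\ref{t2-2}. Applying that theorem's algebraic identity to the truncated part of $f$ and adding back the two new terms gives
$$f(\x) = 10v_6 \left(\left({v_0 \over v_{12}}\right)^{1 \over 4}x_1^3 + \left({v_{12} \over v_0}\right)^{1 \over 4}x_3^3\right)^2 + v_6\left( \sqrt{10-\sqrt{70} \over 2}x_2^3 + \sqrt{150+15\sqrt{70}}\,x_1x_2x_3\right)^2 + \tilde f_1(\x),$$
where $\tilde f_1(\x) = a x_1^6 + 6v_1x_1^5x_2 + b x_2^6 + c x_3^6 + 6v_{11}x_2x_3^5 - d\,x_1^2x_2^2x_3^2$. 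As $v_6>0$ the two displayed terms are perfect squares, so it suffices to prove that $\tilde f_1$ is SOS.

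The heart of the argument is to split $\tilde f_1 = B_1 + B_2 + B_3$, where $t_1,t_2>0$ govern how much of the diagonal budget is spent neutralizing each cross term. I would take
$$B_1 = a_1 x_1^6 + 6v_1x_1^5x_2 + b_1 x_2^6, \qquad B_2 = b_2 x_2^6 + 6v_{11}x_2x_3^5 + c_1 x_3^6,$$
with $a_1 = |v_1|t_1v_0$, $b_1 = |v_1|(5/(t_1v_0))^5$, $c_1 = |v_{11}|t_2v_{12}$ and $b_2 = |v_{11}|(5/(t_2v_{12}))^5$, and collect the leftovers in $B_3 = a_2 x_1^6 + b_3 x_2^6 + c_2 x_3^6 - d\,x_1^2x_2^2x_3^2$ with $a_2 = a-a_1$, $b_3 = b-b_1-b_2$ and $c_2 = c-c_1$. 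The decisive (and only nonobvious) point is the choice of these values: a direct computation shows $(a_1/5)^{5/6}b_1^{1/6} = |v_1|$ and $(c_1/5)^{5/6}b_2^{1/6} = |v_{11}|$, so the hypothesis of Lemma~\ref{l3-1} is met with equality for $B_1$ and for $B_2$ (the latter being Lemma~\ref{l3-1} applied to the variables $(x_3,x_2)$). Multiplying (\ref{e3-8}) by $t_1v_0$ and (\ref{e3-9}) by $t_2v_{12}$ yields $a_1\le a$ and $c_1\le c$, i.e.\ $a_2,c_2\ge0$, while (\ref{e3-10}) is precisely $b_1+b_2\le b$, i.e.\ $b_3\ge0$.

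With the partition in hand the three pieces are treated separately. $B_1$ is a PSD binary sextic by Lemma~\ref{l3-1}, hence SOS since every PSD binary form is a sum of squares (the $n=2$ case of Hilbert's theorem), and likewise for $B_2$. For $B_3$, nonnegativity of $a_2,b_3,c_2$ together with the arithmetic--geometric inequality gives $a_2x_1^6 + b_3x_2^6 + c_2x_3^6 \ge 3(a_2b_3c_2)^{1/3}x_1^2x_2^2x_3^2$; recognizing that the three parenthesized factors of (\ref{e3-11}) are exactly $a_2$, $c_2$ and $b_3$, hypothesis (\ref{e3-11}) states $27\,a_2b_3c_2\ge d^3$, whence $3(a_2b_3c_2)^{1/3}\ge d$ and $B_3$ is a PSD diagonal minus tail form, thus SOS by \cite{FK}. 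Summing the two squares with $B_1,B_2,B_3$ exhibits $f$ as a sum of squares.

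The main obstacle is not any individual estimate but finding the right allocation: that $a_1=|v_1|t_1v_0$ paired with $b_1=|v_1|(5/(t_1v_0))^5$ is the cheapest way, measured in the $x_2^6$-coefficient, to neutralize $6v_1x_1^5x_2$ for a prescribed expenditure $a_1$ of the $x_1^6$-coefficient. Once this is isolated, the four hypotheses (\ref{e3-8})--(\ref{e3-11}) reveal themselves as, respectively, the $x_1^6$, $x_3^6$, $x_2^6$ and tail budget constraints of the split, and the rest is bookkeeping. As a consistency check I would note that (\ref{e3-11}) forces $a_2,b_3,c_2>0$ strictly, since $d=(60+15\sqrt{70})v_6>0$; this is what makes the allocation admissible.
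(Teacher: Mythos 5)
Your proof is correct and is essentially identical to the paper's own argument: your two perfect squares and your pieces $B_1$, $B_2$, $B_3$ are exactly the paper's $f_2$, $f_5$, $f_3$, $f_4$, $f_1$, handled in the same way (Lemma~\ref{l3-1} plus Hilbert's binary-form theorem for $B_1,B_2$, and the arithmetic--geometric inequality plus the diagonal-minus-tail result of \cite{FK} for $B_3$). You in fact supply details the paper leaves implicit, such as the equality case of Lemma~\ref{l3-1} and the verification that \reff{e3-8}--\reff{e3-10} make the three diagonal budgets nonnegative, and you correctly read the sign of the $x_3^6$ term in $f_1$, which is misprinted as a minus in the paper.
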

\begin{proof}  We write $f(\x) = \sum_{i=1}^5 f_i(\x)$, where
$$f_2(\x) = 10v_6 \left(\left({v_0 \over v_{12}}\right)^{1 \over 4}x_1^3 + \left({v_{12} \over v_0}\right)^{1 \over 4}x_3^3\right)^2,$$
$$f_3(\x) = |v_1|t_1v_0x_1^6+6v_1x_1^5x_2 + |v_1|\left({5 \over t_1v_0}\right)^5x_2^6,$$
$$f_4(\x) = |v_{11}|t_2v_{12}x_3^6+6v_{11}x_3^5x_2 + |v_{11}|\left({5 \over t_2v_{12}}\right)^5x_2^6,$$
$$f_5(\x) = v_6\left( \sqrt{10-\sqrt{70} \over 2}x_2^3 + \sqrt{150+15\sqrt{70}}x_1x_2x_3\right)^2$$
and
\begin{eqnarray*}
f_1(\x) & = & \left(v_0  - 10v_6\left({v_0 \over v_{12}}\right)^{1 \over 2} -|v_1|t_1v_0\right)x_1^6 + \left({\sqrt{70}-8 \over 2}v_6 -|v_1|\left({5 \over t_1v_0}\right)^5 -  |v_{11}|\left({5 \over t_2v_{12}}\right)^5\right)x_2^6 \\
&& - \left(v_{12}  - 10v_6\left({v_{12} \over v_0}\right)^{1 \over 2} -|v_{11}|t_2v_{12}\right)x_3^6 - v_6(60+15\sqrt{70})x_1^2x_2^2x_3^2.
\end{eqnarray*}
Clearly, $f_2$ and $f_5$ are squares.  From Lemma 1, we may show that $f_3$ and $f_4$ are PSD.   Since each of $f_3$ and $f_4$ has only two variables, they are SOS.   If (\ref{e3-8}-\ref{e3-11}) hold, by the arithmetic-geometric inequality, $f_1$ is PSD.  In this case, $f_1$ is a PSD diagonal minus tail form.   By \cite{FK}, $f_1$ is SOS.   Thus, if (\ref{e3-8}-\ref{e3-11}) hold, then $f$, hence $\A$, is SOS.
\end{proof}

\section{A Class of SOS Hankel Tensors}
\hspace{4mm}

In this section, we provide further classes for SOS Hanke tensors and examples for SOS Hankel tensors which are not strong Hankel tensors.

We say that $\A$ is a strong Hankel tensor generated by an absolutely integrable real valued function
$h:(-\infty, +\infty) \rightarrow [0, +\infty)$ if  it is a Hankel tensor and  its generating vector $\vv = (v_0, v_1, \cdots, v_{(n-1)m})^\top$ satisfies
\begin{equation}\label{e-str}
  v_k =\int_{-\infty}^{\infty}t^kh(t)dt, \quad k = 0, 1, \cdots ,(n-1)m.
\end{equation}
Such a real valued function $h$ is called the generating function of the strong Hankel tensor $\A$. It has been shown in \cite{Qi15} that any
strong Hankel tensor generated by an absolutely integrable real valued nonnegative function is a strong Hankel tensor.

We now define the completely decomposable tensor.

\begin{Definition}
 Let $\A \in  S_{m, n}$.  If there are positive integer $r \in \mathbb{N}$ and vectors $\x_j \in \Re^n$ for $j \in [r]$ such that
\begin{equation} 
\A = \sum_{j \in [r]} \x_j^{\otimes m},\end{equation}
then we say that $\A$ is a {\bf completely decomposable tensor}.

\end{Definition}

Then in the following theorem, we will show that when the order is even,
a strong Hankel tensor generated by an absolutely integrable real valued nonnegative function is indeed a limiting point of complete Hankel
tensors, which is a completely decomposable tensor.

\begin{Theorem}{(\bf Completely decomposability of strong Hankel tensors)}\label{th:2}
Let $m,n \in \mathbb{N}$. Let $\A$ be an $m$th-order
$n$-dimensional strong Hankel tensor generated by an absolutely integrable real valued nonnegative function. If the order $m$ is an even
number, then $\mathcal{A}$ is a completely decomposable tensor and a
limiting point of  complete Hankel tensors. If the order $m$ is an
odd number, then $\mathcal{A}$ is
a completely $r$-decomposable tensor with
$r=(n-1)m+1$.
\end{Theorem}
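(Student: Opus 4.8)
The plan is to treat the even and odd order cases by entirely different mechanisms, since the two conclusions are of genuinely different strength and only the even case really uses the strong-Hankel (moment) structure.

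For the odd case the moment structure is essentially irrelevant. Any Hankel tensor has generating vector $\vv=(v_0,\dots,v_M)^\top$ with $M:=(n-1)m$, and I would simply pick any $M+1$ distinct reals $t_1,\dots,t_{M+1}$ and solve the linear system $v_k=\sum_{j=1}^{M+1}\alpha_j t_j^k$ for $k=0,\dots,M$, with unknowns $\alpha_j\in\Re$. Its coefficient matrix $(t_j^k)_{0\le k\le M,\ 1\le j\le M+1}$ is a square Vandermonde matrix, hence nonsingular for distinct nodes, so a real solution exists and gives the Vandermonde decomposition $\A=\sum_{j=1}^{M+1}\alpha_j\uu_j^{\otimes m}$ with $\uu_j=(1,t_j,\dots,t_j^{n-1})^\top$. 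Because $m$ is odd, $t\mapsto t^m$ is a bijection of $\Re$, so setting $\x_j=\alpha_j^{1/m}\uu_j$ (real $m$-th root, same sign as $\alpha_j$) gives $\alpha_j\uu_j^{\otimes m}=\x_j^{\otimes m}$ for each $j$, whence $\A=\sum_{j=1}^{M+1}\x_j^{\otimes m}$ is completely $r$-decomposable with $r=(n-1)m+1$.

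For the even case I would prove the two assertions in turn. First, the limiting-point claim by discretizing the defining integral: since the moments $v_k=\int t^k h(t)\,dt$ are finite for $k=0,\dots,M$, each $t^k h(t)$ is absolutely integrable, so I can approximate the measure $h(t)\,dt$ restricted to $[-N,N]$ by nonnegative finitely-supported measures $\sum_j w_j^{(N)}\delta_{t_j^{(N)}}$ (step-function approximation from below) whose moments satisfy $v_k^{(N)}:=\sum_j w_j^{(N)}(t_j^{(N)})^k\to v_k$ for every $k\le M$ as the mesh refines and $N\to\infty$, by monotone/dominated convergence together with the tail estimate $\int_{|t|>N}|t|^k h(t)\,dt\to0$. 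Each approximant is the generating vector of the complete Hankel tensor $\A_N=\sum_j w_j^{(N)}(\uu_j^{(N)})^{\otimes m}$ (nonnegative coefficients), and since a Hankel tensor's entries are precisely the entries of its generating vector, $\A_N\to\A$. Hence $\A$ is a limiting point of complete Hankel tensors.

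Second, to upgrade this to complete decomposability I would show that, for even $m$, the set of completely decomposable tensors is a closed cone containing every complete Hankel tensor, and then apply closedness to $\A_N\to\A$. The inclusion is immediate: a complete Hankel tensor $\sum_i\alpha_i\uu_i^{\otimes m}$ with $\alpha_i\ge0$ equals $\sum_i(\alpha_i^{1/m}\uu_i)^{\otimes m}$ since $m$ is even and $\alpha_i\ge0$. For closedness I would write the completely decomposable cone as the conical hull of the compact set $K=\{\x^{\otimes m}:\|\x\|=1\}$; the linear functional $\A\mapsto\sum_{i\in[n]}a_{i\cdots i}$ takes the value $\sum_i x_i^m$ on $\x^{\otimes m}$, which (because $m$ is even) is continuous, strictly positive, and therefore bounded below on $K$, so $\0\notin\mathrm{conv}(K)$; the conical hull of a compact set whose convex hull avoids the origin is closed, and by Carathéodory every member is a finite sum. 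Applying closedness to $\A_N\to\A$ then yields $\A=\sum_{j=1}^r\x_j^{\otimes m}$ for some finite $r$, i.e. $\A$ is completely decomposable.

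The main obstacle is precisely this even-order step: being a limit of complete Hankel tensors only places $\A$ in the closure, and the positivity of the approximating weights (guaranteed here by the strong-Hankel/moment structure, and exactly what is unavailable for even $m$ but automatic for odd $m$) must be combined with the compactness argument to conclude the limit is itself a finite real sum of $m$-th powers. As an alternative to the closedness route one could instead extract a finite decomposition directly from the moment representation, via Gaussian quadrature when the associated Hankel matrix is positive definite and a flat-extension or perturbation argument when it is singular; this also bounds the number of terms by $\frac{(n-1)m}{2}+1$, but the singular case is delicate, so I would favour the closedness argument for the even case.
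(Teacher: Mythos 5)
Your proposal follows essentially the same route as the paper: the odd case via the nonsingular Vandermonde system plus real $m$-th roots, and the even case by discretizing the moment integral $\int (\sum_i t^{i-1}x_i)^m h(t)\,dt$ into nonnegative finite Vandermonde combinations (complete Hankel tensors) and then invoking closedness of the completely decomposable cone for even $m$. The one difference is that you actually prove that closedness (conical hull of the compact set $\{\x^{\otimes m}:\|\x\|=1\}$, whose convex hull avoids the origin because the diagonal-sum functional is strictly positive on it), whereas the paper merely asserts this fact; that is a worthwhile supplement, since the paper's argument hinges on it.
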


\begin{proof}

Let $h$ be the generating function of the strong Hankel tensor $\mathcal{A}$. Then, for any $\x\in \Re^n$,
\begin{eqnarray}\label{eq:1}
f(\x):=\A \x^{\otimes m} & = &  \sum_{i_1,i_2,\ldots,i_m=1}^n v_{i_1+i_2+\ldots+i_m-m} x_{i_1}x_{i_2} \ldots x_{i_m} \nonumber \\
& = & \sum_{i_1,i_2,\ldots,i_m=1}^n  \left(\int_{-\infty}^{+\infty}  t^{i_1+i_2+\ldots+i_m-m} h(t)   dt \right)\, x_{i_1}x_{i_2} \ldots x_{i_m} \nonumber \\
&=&\int_{-\infty}^{+\infty}  \left(\sum_{i_1,i_2,\ldots,i_m=1}^n t^{i_1+i_2+\ldots+i_m-m} x_{i_1}x_{i_2} \ldots x_{i_m}  \right)h(t)dt \, \nonumber \\
&=& \int_{-\infty}^{+\infty}  \left(\sum_{i=1}^n t^{i-1} x_{i}\right)^m h(t)dt = \lim_{l \rightarrow +\infty} f_l(x),
\end{eqnarray}
where
\[
f_l(\x)=\int_{-l}^{l}  \left(\sum_{i=1}^n t^{i-1} x_{i}\right)^m h(t)dt.
\]
By the definition of Riemann integral, for each $l \ge 0$, we have
$f_l(\x)  = \lim_{k\rightarrow \infty} f_l^k(\x)$,
where $f_l^k(\x)$ is a polynomial defined by
\[
f_l^k(\x):=\sum_{j=0}^{2kl} \frac{\left(\sum_{i=1}^n
(\frac{j}{k}-l)^{i-1} x_i \right)^m h(\frac{j}{k}-l)}{k}.
\]
Fix any $l \ge 0$ and $k \in \mathbb{N}$.
Note that
\begin{eqnarray*}
f_l^k(\x):=\sum_{j=0}^{2kl} \frac{\left(\sum_{i=1}^n
(\frac{j}{k}-l)^{i-1} x_i \right)^m h(\frac{j}{k}-l)}{k}&= & \sum_{j=0}^{2kl}\left(\sum_{i=1}^n\frac{
(\frac{j}{k}-l)^{i-1}  h(\frac{j}{k}-l)^{\frac{1}{m} }}{k^{\frac{1}{m}}} x_i\right)^m \\
& = & \sum_{j=0}^{2kl} (\langle \uu_j, \x \rangle)^m,
\end{eqnarray*}
where $\uu_j=\frac{
 h\left(\frac{j}{k}-l\right)^{\frac{1}{m} }}{k^{\frac{1}{m}}}\left(1,\frac{j}{k}-l,\ldots,(\frac{j}{k}-l)^{n-1}\right)$. Here $\uu_j$ are always well-defined as
$h$ takes nonnegative values \cite{Qi15}. Define $\A_l^k$ be a
symmetric tensor such that
$f_l^k(\x)=\A_l^k \x^{\otimes m}$.
Then, it is easy to see that
 each $\A_k^l$ is a complete Hankel tensor and thus a completely decomposable tensor. {  Note that} the completely decomposable tensor cone $CD_{m,n}$ is a closed convex cone when $m$ is even.
 It then follows that
$\A =\lim_{k \rightarrow \infty}\lim_{l \rightarrow
\infty}\A_k^l$ is a  {  completely decomposable tensor and a limiting point of complete Hankel tensors.}

To see the assertion in the odd order case, we use a similar
argument as in \cite{Qi15}. Pick real numbers
$\gamma_1,\ldots,\gamma_{r}$ with $r=(n-1)m+1$ and $\gamma_i \neq
\gamma_j$ for $i \neq j$. Consider the following linear equation in
$\alpha=(\alpha_1,\ldots,\alpha_r)$ with
\[
v_k=\sum_{i=1}^r \alpha_i \gamma_i^{k}, \ k=0,\ldots,(n-1)m.
\]
Note that this linear equation always has a solution say $\bar \alpha= (\bar \alpha_1,\ldots,\bar \alpha_r)$ because the matrix in the above linear equation is a
nonsingular Vandermonde matrix. Then, we see that
\[
\A_{i_1,\ldots,i_m} =v_{i_1+\ldots+i_m-m}=\sum_{i=1}^r \bar
\alpha_i \gamma_i^{i_1+\ldots+i_m-m}=\sum_{i=1}^r \bar \alpha_i
\left((\uu_i)^{\otimes m}\right)_{i_1,\ldots,i_m},
\]
where $\uu_i \in \Re^n$ is given by
$\uu_i=(1,\gamma_i,\ldots,\gamma_i^{n-1})^T$. This shows that
$\A=\sum_{i\in [r]} \bar \alpha_i (\uu_i)^{\otimes m}$.
Now, as $m$ is an odd
number, we have
$\A =\sum_{i=1}^r \left({\bar \alpha_i}^{\frac{1}{m}}
\uu_i\right)^{\otimes m}$.
Therefore, $\mathcal{A}$ is a completely decomposable tensor and the last conclusion follows.
\end{proof}

In \cite{Qi15}, the author has provided an example of  positive semi-definite Hankel tensors with order $m=4$, which is not a strong Hankel tensor generated by an absolutely integrable real valued nonnegative function.  We now extend this example to the general case where $m = 2k$ for any integer $k \ge 2$.  We will also further show that such tensors are indeed SOS tensors but not completely decomposable {  (and so, are  not
strong Hankel tensors generated by an absolutely integrable real valued nonnegative function by Theorem \ref{th:2})}.

Let $m = 2k$, $n$ = 2, $k$ is an integer and $k \ge 2$.   {  Let $v_0 = v_m = 1$, $v_{2l} = v_{m-2l} = -{1 \over \left({m \atop 2l}\right)}$, $l=1,\ldots,k-1$, and $v_j = 0$ for other $j$. Let $\A = (a_{i_1\cdots i_m})$ be defined by
$a_{i_1\cdots i_m} = v_{i_1+\cdots +i_m-m},$
for $i_1, \cdots, i_m = 1, 2$.}   Then $\A$ is an even order Hankel tensor.
{  For any $\x \in \Re^2$, we have
$$\A \x^{\otimes m} = x_1^m - \sum_{j=1}^{k-1} x_1^{m-2j} x_2^{2j}   + x_2^m = \sum_{j=0}^{k-2}\left(x_1^{k-j}x_2^j - x_1^{k-j-2}x_2^{j+2}\right)^2.$$
Thus,} $\A$ is an SOS-Hankel tensor, hence a positive semi-definite Hankel tensor.  On the other hand, $\A$ is not a completely decomposable tensor.  Assume that $\A$ is a completely decomposable tensor.   Then there are vectors $\uu_j = (a_j, b_j)^\top$ for $j \in [r]$ such that
$\A = \sum_{j=1}^r \uu_j^m.$
Then for any $\x \in \Re^2$,
$$\A \x^{\otimes m} = \sum_{p=1}^r (a_px_1+b_px_2)^m = \sum_{j=0}^m \sum_{p=1}^r \left({ m \atop j}\right)a_p^{m-j}b_p^jx_1^{m-j}x_2^j.$$
{  On the other hand,
$$\A \x^{\otimes m} = x_1^m -  \sum_{j=1}^{k-1} x_1^{m-2j} x_2^{2j} + x_2^m.$$}
Comparing the coefficients of $x_1^{m-2}x_2^2$ in the above two expressions of $\A \x^m$, we have
$$\sum_{p=1}^r \left({ m \atop 2}\right)a_p^{m-2}b_p^2 = -1.$$
This is impossible.   Thus, $\A$ is not completely decomposable (and so, is not a strong Hankel tensor generated by an absolutely integrable real valued nonnegative function).

\bigskip

We now have two further questions:

1. Is a completely decomposable Hankel tensor always a strong Hankel tensor?

2. Is a truncated Hankel tensor completely decomposable?


\end{document}